\theoremstyle{plain}
\newtheorem{theorem}{Theorem}[section]
\newtheorem{lemma}[theorem]{Lemma}
\newtheorem{corollary}[theorem]{Corollary}
\newtheorem{proposition}[theorem]{Proposition}
\newtheorem{remark}[theorem]{Remark}
\newcommand{\Rmnum}[1]{\expandafter\@slowromancap\romannumeral #1@}
\def\ri{\mathrm i}
\def\rb{\mathbb R}
\def\nb{\mathbb N}
\def\zb{\mathbb Z}
\def\cb{{\mathbb C}}
\def\res{\mathop{\rm{Res}}}
\def\rrw{\rightarrow}
\numberwithin{equation}{section}
\title{\Large \bf Counting the number of solutions to certain infinite Diophantine equations}
\author{Nian Hong Zhou\footnote{Corresponding author} and Yalin Sun}
\date{}
\date{}
\begin{document}
\maketitle
\begin{abstract}
Let $r, v, n$ be positive integers. This paper investigate the number of solutions $s_{r,v}(n)$ of the following infinite Diophantine equations
$$
n=1^{r}\cdot |k_{1}|^{v}+2^{r}\cdot |k_{2}|^{v}+3^{r}\cdot |k_{3}|^{v}+\ldots,
$$
for ${\bf k}=(k_1,k_2,k_3,\dots)\in\zb^{\infty}$. For each $(r,v)\in\nb\times\{1,2\}$, a generating function and some asymptotic formulas of $s_{r,v}(n)$ are established.
\end{abstract}

\maketitle
\section{Introduction and statement of results}

Let $r,n$ be positive integers. A partition into $r$-th powers of an integer $n$ is a sequence of non-increasing $r$-th powers of positive integers whose sum equals $n$. Such a partition corresponds to a solution of the following infinite Diophantine equation:
\begin{equation}\label{eq1}
n=1^r\cdot k_1+2^r\cdot k_2+3^r\cdot k_3+\dots,
\end{equation}
for ${\bf k}=(k_1,k_2,k_3,\dots)\in\nb_{0}^{\infty}$. Let $p_r(n)$ be the number of partitions of $n$ into $r$-th powers and let $p_r(0):=1$, we have the generating function
\begin{equation*}
\sum_{n\ge 0}p_r(n)q^n=\prod_{n\ge 1}\frac{1}{1-q^{n^r}},
\end{equation*}
where $q\in\cb$ with $|q|<1$.\newline

Determining the values of $p_r(n)$ has a long history and can be traced back to the work of Euler.
In the famous paper \cite{MR1575586}, Hardy and Ramanujan proved an asymptotic expansion for $p_1(n)$
as $n\rrw \infty$. They \cite[p.~111]{MR1575586} also gave an asymptotic formula for $p_r(n), r\ge 2$,  without proof. In \cite[Theorem~2]{MR1555393}, Wright confirmed their asymptotic formula
$$p_{r}(n)\sim \frac{c_rn^{\frac{1}{r+1}-\frac{3}{2}}}{\sqrt{(2\pi)^{1+r}(1+1/r)}}e^{(r+1)c_rn^{\frac{1}{r+1}}},$$
as integer $n\rrw \infty$, where $c_r=\left(r^{-1}\zeta\left(1+{1}/{r}\right)\Gamma\left(1+{1}/{r}\right)\right)^{\frac{r}{r+1}}$,
$\zeta(\cdot)$ is the Riemann zeta function and $\Gamma(\cdot)$ is the classical Euler Gamma function. \newline

In this paper we investigate certain infinite Diophantine equation analogous to \eqref{eq1}. For given positive integers $n$ and $r$, we use $s_{r, v}(n)$ to denote the number of solutions of the following infinite Diophantine equation:
\begin{equation}\label{eq2}
n=1^{r}\cdot |k_{1}|^{v}+2^{r}\cdot |k_{2}|^{v}+3^{r}\cdot |k_{3}|^{v}+\ldots,
\end{equation}
for ${\bf k}=(k_1,k_2,k_3,\dots)\in\zb^{\infty}$.  The first result of this paper is about the generating function for $s_{r,v}(n)$.
\begin{proposition}\label{pro1}Let $s_{r,v}(0):=1$ and $q\in\cb$ with $|q|<1$. We have:
$$
 G_{r,1}(q):=\sum_{n\ge 0}s_{r,1}(n)q^{n}=\prod_{n\ge 1}\frac{1+q^{n^r}}{1-q^{n^r}},
$$
and
$$
G_{r,2}(q):=\sum_{n\ge 0}s_{r,2}(n)q^{n}=\prod_{j\ge 1}\prod_{n\ge 1}\frac{1-(-1)^nq^{nj^r}}{1+(-1)^nq^{nj^r}}.
$$
\end{proposition}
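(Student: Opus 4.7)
The plan is to factor the generating function over the independent coordinates $k_j$, observing that any admissible tuple $(k_1,k_2,\ldots)\in\zb^\infty$ for equation \eqref{eq2} has only finitely many nonzero entries, and distinct choices of the individual $k_j$ contribute independently to the weight $\sum_{j\geq 1} j^r |k_j|^v$. First I would establish the factorisation
\[
G_{r,v}(q) = \prod_{j\geq 1} \sum_{k\in\zb} q^{j^r |k|^v},
\]
valid initially as a formal identity and then as an analytic identity on $|q|<1$ by standard absolute-convergence comparisons with $\prod_{j\geq 1}(1-|q|^{j^r})^{-2}$.

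The case $v=1$ is then immediate: each inner sum is a bilateral geometric-type series
\[
\sum_{k\in\zb} q^{j^r |k|} = 1 + 2\sum_{k\geq 1} q^{j^r k} = \frac{1+q^{j^r}}{1-q^{j^r}},
\]
and taking the product over $j\geq 1$ yields the first formula.

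For $v=2$, the inner sum is the classical Jacobi theta function $\theta_3(x):=\sum_{k\in\zb} x^{k^2}$ evaluated at $x=q^{j^r}$, so the substance of the claim reduces to the single-variable identity
\[
\theta_3(x) = \prod_{n\geq 1} \frac{1-(-1)^n x^n}{1+(-1)^n x^n}.
\]
I would derive this from Jacobi's triple product identity: its $z=1$ specialisation gives $\theta_3(x)=\prod_{n\geq 1}(1-x^{2n})(1+x^{2n-1})^2$, and then a short manipulation, separating the target product into its even and odd index contributions and simplifying via the elementary Pochhammer relations $(x;x)_\infty(-x;x)_\infty=(x^2;x^2)_\infty$ and $(-x^2;x^2)_\infty=(x^4;x^4)_\infty/(x^2;x^2)_\infty$, shows that both products equal $(x^2;x^2)_\infty^5/\bigl((x;x)_\infty^2(x^4;x^4)_\infty^2\bigr)$. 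Substituting $x=q^{j^r}$ and multiplying over $j\geq 1$ gives the stated formula for $G_{r,2}(q)$.

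The main (though admittedly modest) obstacle is precisely this $q$-product reformulation of $\theta_3$: the target shape is not the standard triple-product form, so one must be careful in the parity splitting and Pochhammer cancellations to reach exactly $\prod_{n\geq 1}(1-(-1)^n x^n)/(1+(-1)^n x^n)$. Everything else is elementary bookkeeping.
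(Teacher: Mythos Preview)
Your proposal is correct and follows essentially the same approach as the paper: factor the generating function as $\prod_{j\ge 1}\sum_{k\in\zb}q^{j^r|k|^v}$, sum the geometric series for $v=1$, and invoke the theta identity $\sum_{k\in\zb}x^{k^2}=\prod_{n\ge 1}\frac{1-(-x)^n}{1+(-x)^n}$ for $v=2$. The only difference is that the paper simply cites this last identity as a result of Gauss (Andrews, Corollary~2.10), whereas you derive it by hand from the Jacobi triple product via the Pochhammer manipulations you outline; your computation is correct (both sides indeed reduce to $(x^2;x^2)_\infty^5/\bigl((x;x)_\infty^2(x^4;x^4)_\infty^2\bigr)$), so this is just a more self-contained version of the same argument.
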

\begin{remark}
From the proof of this proposition (See Subsection \ref{sec21}), the above infinite product expansion for $G_{r,s}(q)~(r\in\nb, s=1,2)$ follows the identities:
$$\sum_{n\in\zb}q^{|n|}=\frac{1+q}{1-q}~~\;\mbox{and}\; ~\sum_{n\in\zb}q^{|n|^2}=\prod_{n\ge 1}\frac{1-(-q)^n}{1+(-q)^n}.$$
Their actually follow from the geometric sequence sum formula and the Jacobi triple product identity. However, any useful expansion for the sum $\sum_{n\in\zb}q^{|n|^v}$ with each integer $v>2$ is still not found yet. Therefore, whether there are infinite product formulas which is similar to Proposition \ref{pro1} for $s_{r,v}(n)~(r\in\nb, v\in\zb_{>2})$ is still a question to be settled.
\end{remark}
Thanks to the infinite product expansion in Proposition \ref{pro1}, we can determine the asymptotic behavior of $G_{r,v}(q)$ when $|q|\rrw 1^{-}$. From which we can further determine the asymptotics of $s_{r,v}(n)~\left((r,v)\in\nb\times \{1,2\}\right)$, as $n\rrw\infty$. More precisely, we prove
\begin{theorem}\label{main}For any given positive integers $r$ and $p$, we have
\begin{align*}
s_{r,1}(n)=\frac{\kappa_r^{3/2}}{\sqrt{2^{r+1}\pi^r}}\left(\frac{1}{n}\right)^{\frac{1+1/2}{1+1/r}}W_{\frac{1}{r},\frac{1}{2}}\left(\kappa_rn^{\frac{1}{1+r}}\right)\left(1+O\left(\frac{1}{n^p}\right)\right),
\end{align*}
and
\begin{align*}
s_{r,2}(n)=\frac{\kappa_r^{5/4}}{\sqrt[4]{2^{r}\pi^{r+1}}}\left(\frac{\eta(1/r)}{n}\right)^{\frac{1+1/4}{1+1/r}}W_{\frac{1}{r},\frac{1}{4}}\left(\kappa_r\eta(1/r)\left(\frac{n}{\eta(1/r)}\right)^{\frac{1}{1+r}}\right)\left(1+O\left(\frac{1}{n^p}\right)\right),
\end{align*}
as integer $n\rrw \infty$. Here $\kappa_r>0$ is given by
$$\kappa_r^{1+1/r}=2r^{-1}(1-2^{-1-1/r})\zeta(1+1/r)\Gamma(1+1/r),$$
$\eta(s)=\sum_{n\ge 1}(-1)^{n-1}n^{-s}$ is the Dirichlet eta function, and
$$W_{\alpha,\beta}\left(\lambda\right)=\frac{1}{2\pi}\int_{-1}^{1}(1+\ri u)^{\beta}\exp\left(\lambda(\alpha^{-1}(1+\ri u)^{-\alpha}+(1+\ri u) )\right)\,du,$$
for all $\alpha,\beta, \lambda>0$.
\end{theorem}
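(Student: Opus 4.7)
The plan is to combine a Mellin--Barnes analysis of $G_{r,v}(e^{-\tau})$ as $\tau\to 0^{+}$ with a saddle-point evaluation of Cauchy's coefficient integral
\begin{equation*}
s_{r,v}(n)=\frac{1}{2\pi i}\int_{\tau_0-i\pi}^{\tau_0+i\pi}G_{r,v}(e^{-\tau})\,e^{n\tau}\,d\tau,
\end{equation*}
taken on the circle $|q|=e^{-\tau_0}$ through the unique real positive saddle $\tau_0$ of the phase $n\tau+A_{v}\tau^{-1/r}$, where $A_{v}$ is the exponential rate to be computed.

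The first step is a sharp asymptotic for $G_{r,v}(e^{-\tau})$. Taking logarithms in Proposition~\ref{pro1}, expanding $\log\frac{1\pm x}{1\mp x}=\pm 2\sum_{m\ge 0}\frac{x^{2m+1}}{2m+1}$, and applying the Mellin--Barnes identity $e^{-x}=\frac{1}{2\pi i}\int_{(c)}\Gamma(s)x^{-s}\,ds$ to each exponential yields, after swapping summation and integration, for $c>\max(1,1/r)$,
\begin{align*}
\log G_{r,1}(e^{-\tau})&=\frac{1}{\pi i}\int_{(c)}\Gamma(s)\zeta(rs)(1-2^{-s-1})\zeta(s+1)\tau^{-s}\,ds,\\
\log G_{r,2}(e^{-\tau})&=\frac{1}{\pi i}\int_{(c)}\Gamma(s)\zeta(rs)(1-2^{-s-1})\zeta(s+1)\eta(s)\tau^{-s}\,ds.
\end{align*}
I then shift the contour leftwards to $\re(s)=-N$. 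The simple pole at $s=1/r$ from $\zeta(rs)$ gives the leading exponential growth with coefficient $A_{1}=r\kappa_r^{1+1/r}$ (respectively $A_{2}=r\eta(1/r)\kappa_r^{1+1/r}$), the identification with $\kappa_r$ being immediate from $\Gamma(1+1/r)=r^{-1}\Gamma(1/r)$. The double pole at $s=0$ is evaluated by the Laurent expansions of $\Gamma(s)$, $\zeta(s+1)$, $\zeta(rs)$ (using $\zeta(0)=-1/2$, $\zeta'(0)=-\frac{1}{2}\log(2\pi)$) and, in the second case, $\eta(s)$ (using $\eta'(0)=\frac{1}{2}\log(\pi/2)$); its residue is $\frac{1}{2}\log(\tau/(2^{r+1}\pi^r))$ for $v=1$ and $\frac{1}{4}\log(\tau/(2^{r}\pi^{r+1}))$ for $v=2$, so the exponents $\beta_1=\frac{1}{2}$ and $\beta_2=\frac{1}{4}$ appearing in $W_{1/r,\beta}$ arise naturally. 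The residues at $s=-k$ for $k\ge 1$ all vanish, because one of $\zeta(-rk)$, $\zeta(1-k)$, $(1-2^{k-1})$ is zero in every case. Hence
\begin{equation*}
G_{r,v}(e^{-\tau})=c_{v}\,\tau^{\beta_{v}}\exp(A_{v}\tau^{-1/r})\bigl(1+O(\tau^{N})\bigr)\qquad(\tau\to 0),
\end{equation*}
valid uniformly in a complex sector around $\rb_{>0}$, with $c_1=(2^{r+1}\pi^r)^{-1/2}$ and $c_2=(2^{r}\pi^{r+1})^{-1/4}$.

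Next, I choose $\tau_0$ as the positive saddle of $n\tau+A_{v}\tau^{-1/r}$, namely $\tau_0=\kappa_r n^{-r/(r+1)}$ for $v=1$ and $\tau_0=\kappa_r\eta(1/r)^{r/(r+1)}n^{-r/(r+1)}$ for $v=2$. On the major arc $\tau=\tau_0(1+\ri u)$ with $|u|\le 1$, the key algebraic identity
\begin{equation*}
n\tau+A_{v}\tau^{-1/r}=\lambda\bigl[(1+\ri u)+r(1+\ri u)^{-1/r}\bigr],\qquad\lambda=n\tau_0,
\end{equation*}
holds because $A_{v}\tau_0^{-1/r}=r\lambda$ at the saddle; this matches exactly the exponent $\lambda(\alpha^{-1}(1+\ri u)^{-\alpha}+(1+\ri u))$ with $\alpha=1/r$ in the definition of $W_{1/r,\beta_{v}}(\lambda)$. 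Substituting $d\tau=\ri\tau_0\,du$ and $\tau^{\beta_{v}}=\tau_0^{\beta_{v}}(1+\ri u)^{\beta_{v}}$ into Cauchy's integral converts the major-arc contribution into $c_{v}\tau_0^{1+\beta_{v}}W_{1/r,\beta_{v}}(\lambda)$, and since $\tau_0^{1+\beta_{v}}\asymp n^{-(1+\beta_{v})/(1+1/r)}$ while $\lambda$ coincides with the argument of $W$ in Theorem~\ref{main}, this reproduces the claimed main term.

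The principal obstacle is the minor-arc estimate for $|u|>1$: one must show that the integrand is exponentially negligible relative to $\exp(A_{v}\tau_0^{-1/r})$. I would derive a uniform bound $\re\log G_{r,v}(e^{-\tau_0(1+\ri u)})\le A_{v}\tau_0^{-1/r}-c\tau_0^{-1/r}$ for some $c>0$ on this range, producing a saving $\exp(-c\tau_0^{-1/r})$ that dominates every polynomial factor. This is established by exploiting the Mellin representation off the positive real axis (where $\re(1+\ri u)^{-1/r}<1$ for $u\ne 0$ forces strict loss) together with a Hardy--Ramanujan style dissection of the infinite product into long and short ranges. Propagating the $O(\tau^{N})=O(n^{-Nr/(r+1)})$ tail from the Mellin shift through the saddle integral then yields the stated error $O(n^{-p})$ for arbitrary $p$, completing the proof.
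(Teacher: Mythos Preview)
Your overall architecture coincides with the paper's: a Mellin--Barnes expansion of $\log G_{r,v}(e^{-\tau})$ (the paper's Proposition~\ref{pro1a}) followed by the circle method with the saddle $\tau_0=(\kappa/n)^{1/(1+\alpha)}$, the major arc $|u|\le 1$ producing exactly the integral $W_{1/r,\beta_v}$ (the paper's Proposition~\ref{pro10}). Your Mellin computation is correct, and your explicit verification that the residues at $s=-k$ ($k\ge 1$) vanish is in fact more detail than the paper supplies.

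The genuine gap is the minor-arc bound. Neither of your two mechanisms covers the full range $x<|y|\le\pi$. The Mellin asymptotic is valid only in a fixed sector about $\rb_{>0}$ (the paper states it for $|\arg z|\le\pi/4$; one can push to $|\arg z|\le\pi/2-\varepsilon$, but no further), so it gives no control once $|y|$ is of order~$1$. And a ``Hardy--Ramanujan style dissection into long and short ranges'' does not by itself yield the required saving $\exp(-c\,x^{-1/r})$ when $r\ge 2$: after the obvious reductions one must show
\[
\sum_{L<j\le 2L}\bigl(1-\cos(j^r y)\bigr)\;\gg\; L
\]
uniformly for $L\asymp x^{-1/r}$ and \emph{every} $y$ with $L^{-r}<|y|/(2\pi)\le 1/2$, which is equivalent to a uniform nontrivial bound $\bigl|\sum_{L<j\le 2L}e^{2\pi i j^r\theta}\bigr|\le(1-\delta_r)L$. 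The paper isolates this as a separate lemma (Lemma~\ref{leme}) and proves it by combining a major-arc expansion from Iwaniec--Kowalski, a Gauss-sum estimate of Gafni for rationals with small denominator, and Weyl's inequality for large denominators. For $r\ge 2$ this exponential-sum input is not optional: the product structure alone does not preclude secondary peaks of $G_{r,v}$ near other roots of unity. You should replace the vague dissection by this explicit Weyl-type estimate; once that is in place, your argument and the paper's are the same.
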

Using the standard saddle-point method, such as referring to \cite[p.127, Theorem 7.1]{MR0435697},  we can derive an asymptotic expansion for $W_{\alpha,\beta}\left(\lambda\right)$ as $\lambda\rrw +\infty$. Hence it is possible
to derive full asymptotic expansions for $s_{r,v}(n)~\left((r,v)\in\nb\times \{1,2\}\right)$. In particular, we have the following leading asymptotics:
\begin{corollary}\label{cor}
For any given positive integer $r$, we have
$$s_{r,1}(n)\sim 2^{-(r+2)/2}\pi^{-(r+1)/2}(1+1/r)^{-1/2}\kappa_rn^{-\frac{3r+1}{2+2r}}e^{(1+r)\kappa_rn^{\frac{1}{1+r}}}$$
and
$$s_{r,2}(n)\sim 2^{-(r+2)/4}\pi^{-(r+3)/4}(1+1/r)^{-1/2} \eta(1/r)^{\frac{3r}{4r+4}}\kappa_r^{3/4}n^{-\frac{5r+2}{4+4r}}e^{(1+r)\kappa_r\eta(1/r)^{\frac{r}{1+r}}n^{\frac{1}{1+r}}},$$
as $n\rrw\infty$. 
\end{corollary}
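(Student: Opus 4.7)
My plan is to derive the corollary in two stages: first extract the leading-order asymptotic of the integral $W_{\alpha,\beta}(\lambda)$ as $\lambda\rrw+\infty$ via the saddle-point (Laplace) method, then substitute the two specializations demanded by Theorem \ref{main} and collect the elementary constants.

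For the first stage, I would write $W_{\alpha,\beta}(\lambda)=(2\pi)^{-1}\int_{-1}^{1}(1+\ri u)^\beta\exp(\lambda\phi(u))\,du$, with phase $\phi(u)=\alpha^{-1}(1+\ri u)^{-\alpha}+(1+\ri u)$, and locate the critical point of $\phi$. A direct differentiation gives $\phi'(u)=\ri\bigl(1-(1+\ri u)^{-\alpha-1}\bigr)$, whose only zero is $u=0$; at this point one computes $\phi(0)=(1+\alpha)/\alpha$ and $\phi''(0)=-(\alpha+1)$. Since $\phi''(0)$ is real and strictly negative, the real part of $\phi$ attains a strict global maximum at $u=0$ on the compact contour $[-1,1]$, so the Laplace method applies directly on the real axis with no contour deformation. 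The amplitude $(1+\ri u)^\beta$ equals $1$ at $u=0$ and is smooth, hence a Gaussian approximation $\phi(u)\approx\phi(0)-\tfrac12(\alpha+1)u^2$ yields the leading term
$$W_{\alpha,\beta}(\lambda)\sim \frac{1}{2\pi}\,e^{\lambda(1+\alpha)/\alpha}\int_{-\infty}^{\infty}e^{-\lambda(\alpha+1)u^2/2}\,du=\frac{e^{\lambda(1+\alpha)/\alpha}}{\sqrt{2\pi\lambda(\alpha+1)}},$$
as $\lambda\rrw+\infty$; the full expansion referenced in the text after Theorem \ref{main} would be used to justify uniform error control, but only the leading term is needed for the corollary.

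For the second stage, I would plug $(\alpha,\beta)=(1/r,1/2)$ and $\lambda=\kappa_rn^{1/(r+1)}$ into the formula for $s_{r,1}(n)$ of Theorem \ref{main}. The exponential factor becomes $e^{(1+r)\kappa_rn^{1/(1+r)}}$, since $(1+\alpha)/\alpha=r+1$; combining the prefactor $\kappa_r^{3/2}(2^{r+1}\pi^r)^{-1/2}n^{-(3/2)/(1+1/r)}$ from Theorem \ref{main} with $(2\pi\lambda(\alpha+1))^{-1/2}=(2\pi\kappa_r(1+1/r)n^{1/(1+r)})^{-1/2}$ and combining the powers of $n$ via $\tfrac{3r}{2(r+1)}+\tfrac{1}{2(r+1)}=\tfrac{3r+1}{2r+2}$ gives the stated first asymptotic. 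The second formula is identical in spirit: set $(\alpha,\beta)=(1/r,1/4)$ and $\lambda=\kappa_r\eta(1/r)^{r/(r+1)}n^{1/(r+1)}$, then collect powers of $\kappa_r$ (namely $5/4-1/2=3/4$), of $\eta(1/r)$ (namely $\tfrac{5r}{4(r+1)}-\tfrac{r}{2(r+1)}=\tfrac{3r}{4(r+1)}$), of $2$ (namely $-r/4-1/2=-(r+2)/4$), of $\pi$ (namely $-(r+1)/4-1/2=-(r+3)/4$), and of $n$ (namely $\tfrac{5r}{4(r+1)}+\tfrac{1}{2(r+1)}=\tfrac{5r+2}{4(r+1)}$), which reproduces the second display.

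The main — and essentially the only — obstacle is the standard justification that the tail of the Laplace integral outside a shrinking neighborhood of $u=0$ is negligible; this is routine because $\operatorname{Re}\phi(u)-\phi(0)$ is dominated by a negative quadratic uniformly on $[-1,1]$, so the tail is bounded by $e^{-c\lambda}$ for some $c>0$ and absorbed into the leading asymptotic. No harmonic-analytic or number-theoretic input is needed beyond Theorem \ref{main}.
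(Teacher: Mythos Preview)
Your proposal is correct and follows essentially the same route as the paper: the paper likewise notes that $u=0$ is a simple saddle point of the phase in $W_{\alpha,\beta}(\lambda)$, invokes the standard Laplace saddle-point method to obtain $W_{\alpha,\beta}(\lambda)\sim (2\pi(1+\alpha)\lambda)^{-1/2}e^{(1+\alpha^{-1})\lambda}$, and then substitutes this into the formulas of Theorem~\ref{main} (via the ``in particular'' clause of Proposition~\ref{pro10}) to read off the constants. Your explicit computation of $\phi(0)$, $\phi''(0)$ and the constant bookkeeping is exactly what the paper leaves implicit.
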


\section{Some results of the generating function}
\subsection{The proof of Proposition \ref{pro1}}\label{sec21}

We shall proceed in a formal manner to prove Proposition \ref{pro1}. Formally, using \eqref{eq2} we have
\begin{align*}
\sum_{n\ge0}s_{r,v}(n)q^{n}&=\sum_{n\ge 0}q^{n}\sum_{\substack{{\bf k}\in\zb^{\infty}\\ \sum_{j\ge 1}j^{r}|k_{j}|^{v}=n}}1\\
&=\sum_{{\bf k}\in\zb^{\infty}}q^{\sum_{j\ge 1} j^{r}| k_{j}|^{v}}
=\prod_{j\ge 1}\Bigg(\sum_{k_j\in\zb}q^{j^{r}| k_{j}|^{v}}\Bigg).
\end{align*}
Now, for $q\in\cb$ with $|q|<1$, by noting that
$$\sum_{n\in\zb}q^{|n|}=1+2\sum_{n\ge 1}q^n=\frac{1+q}{1-q}$$
and an identity of Gauss (see Andrews \cite[Corollary 2.10]{MR0557013}),
\begin{align*}
\sum_{n\in\zb}q^{n^2}=\prod_{n\ge 1}\frac{1-(-q)^n}{1+(-q)^n},
\end{align*}
we have
$$
 G_{r,1}(q):=\sum_{n\ge 0}s_{r,1}(n)q^{n}=\prod_{n\ge 1}\frac{1+q^{n^r}}{1-q^{n^r}},
$$
and
$$
G_{r,2}(q):=\sum_{n\ge 0}s_{r,2}(n)q^{n}=\prod_{j\ge 1}\prod_{n\ge 1}\frac{1-(-1)^nq^{nj^r}}{1+(-1)^nq^{nj^r}}.
$$
Clearly, the product for $G_{r,1}(q)$ is absolute convergence for all $q\in\cb$ with $|q|<1$.  For the product for $G_{r,2}(q)$, since
$$\left|\prod_{j\ge 1}\prod_{n\ge 1}\frac{1-(-1)^nq^{nj^r}}{1+(-1)^nq^{nj^r}}\right|\le \prod_{j\ge 1}\prod_{n\ge 1}\frac{1+|q|^{nj^r}}{1-|q|^{nj^r}}=
\prod_{\ell\ge 1}\left(\frac{1+|q|^{\ell}}{1-|q|^{\ell}}\right)^{\sigma_{1,r}(\ell)},$$
where
$$\sigma_{1,r}(\ell)=\#\{(n,j)\in\nb^2: nj^r= \ell\}\le \ell;$$
and hence the product is absolute convergence for all $q\in\cb$ with $|q|<1$. This completes the proof of Proposition \ref{pro1}.

\subsection{Asymptotics of the generating function}\label{sec22}
To give a proof for Theorem \ref{main}, we need to determine asymptotics of the generating function in Proposition \ref{pro1} at $q=1$.
\begin{proposition}\label{pro1a} Let $r$ be a given positive integer, $z=x+\ri y$ with $x,y\in\rb$ and $|\arg(z)|\le \pi/4$. As $z\rrw 0$,
$$G_{r,1}(e^{-z})=\frac{z^{1/2}\exp(r\kappa_r^{1+1/r}z^{-1/r})}{\sqrt{2^{r+1}\pi^r}}\left(1+O(|z|^p)\right)$$
and
$$G_{r,2}(e^{-z})=\frac{z^{1/4}\exp(r\eta(1/r)\kappa_r^{1+1/r}z^{-1/r})}{\sqrt[4]{2^{r}\pi^{r+1}}}\left(1+O(|z|^p)\right),$$
holds for any given $p>0$. Here $\kappa_r>0$ such that
$$\kappa_r^{1+1/r}=2r^{-2}(1-2^{-1-1/r})\zeta(1+1/r)\Gamma(1/r).$$
\end{proposition}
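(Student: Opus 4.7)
The plan is to apply Mellin transform techniques to the logarithms of the generating functions from Proposition \ref{pro1}.

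First, expanding $\log\tfrac{1+x}{1-x}=2\sum_{k\geq 0}\tfrac{x^{2k+1}}{2k+1}$ in the product for $G_{r,1}$ yields
\begin{equation*}
\log G_{r,1}(e^{-z})=2\sum_{n\geq 1}\sum_{k\geq 0}\frac{e^{-(2k+1)n^r z}}{2k+1},
\end{equation*}
whose Mellin transform against $z^{s-1}$ equals
\begin{equation*}
\widetilde{M}_{r,1}(s)=2\Gamma(s)\zeta(rs)(1-2^{-s-1})\zeta(s+1),\qquad \Re s>1/r.
\end{equation*}
A parallel computation, using $\log\tfrac{1-(-1)^n e^{-\alpha}}{1+(-1)^n e^{-\alpha}}=-2(-1)^n\sum_{k\geq 0}\tfrac{e^{-(2k+1)\alpha}}{2k+1}$ together with $\sum_{n\geq 1}(-1)^{n-1}n^{-s}=\eta(s)$, gives
\begin{equation*}
\widetilde{M}_{r,2}(s)=2\Gamma(s)\zeta(rs)(1-2^{-s-1})\zeta(s+1)\eta(s).
\end{equation*}

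Next, I would apply Mellin inversion and shift the contour in
\begin{equation*}
\log G_{r,v}(e^{-z})=\frac{1}{2\pi \ri}\int_{c-\ri\infty}^{c+\ri\infty}\widetilde{M}_{r,v}(s)z^{-s}\,ds\qquad(c>1/r)
\end{equation*}
leftwards, collecting residues. The simple pole at $s=1/r$ coming from $\zeta(rs)$ (with residue $1/r$) supplies the main exponential term: using the stated definition of $\kappa_r$, one recovers $r\kappa_r^{1+1/r}z^{-1/r}$ for $G_{r,1}$ and $r\eta(1/r)\kappa_r^{1+1/r}z^{-1/r}$ for $G_{r,2}$. The pole at $s=0$ is a double pole, since both $\Gamma(s)$ and $(1-2^{-s-1})\zeta(s+1)=\tfrac{1}{2s}+\tfrac{\gamma+\log 2}{2}+O(s)$ have simple poles. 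Computing the Laurent series, together with $\zeta'(0)=-\tfrac{1}{2}\log(2\pi)$ and---in the $v=2$ case---$\eta(0)=\tfrac{1}{2}$, $\eta'(0)=\tfrac{1}{2}\log(\pi/2)$, the residue of $\widetilde{M}_{r,v}(s)z^{-s}$ at $s=0$ evaluates to $\tfrac{1}{2}\log z-\tfrac{1}{2}\log(2^{r+1}\pi^r)$ when $v=1$ and to $\tfrac{1}{4}\log z-\tfrac{1}{4}\log(2^{r}\pi^{r+1})$ when $v=2$. Exponentiating produces the stated prefactors $z^{1/2}/\sqrt{2^{r+1}\pi^r}$ and $z^{1/4}/\sqrt[4]{2^r\pi^{r+1}}$.

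The main technical obstacle is to establish the error $O(|z|^p)$ for arbitrarily large $p$. The crucial observation is that $\widetilde{M}_{r,v}(s)$ has no poles in $\Re s\leq 0$ other than $s=0$: at $s=-1$ the factor $1-2^{-s-1}$ vanishes; at $s=-(2m+1)$ with $m\geq 1$ the factor $\zeta(s+1)$ vanishes as a trivial zero of $\zeta$; and at $s=-2m$ with $m\geq 1$ the factor $\zeta(rs)=\zeta(-2mr)$ vanishes as a trivial zero. In each case these simple zeros cancel the simple pole of $\Gamma(s)$, so $\widetilde{M}_{r,v}$ is holomorphic on $\{\Re s\leq 0\}\setminus\{0\}$. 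Consequently, the contour may be shifted to $\Re s=-N$ for any $N>0$, and in the sector $|\arg z|\leq\pi/4$ the integrand on this line is dominated by $|z|^N$ times Stirling's exponential decay $|\Gamma(\sigma+\ri t)|\ll e^{-(\pi/2)|t|}|t|^{\sigma-1/2}$, which beats the weight $|z^{-s}|\leq|z|^N e^{(\pi/4)|t|}$, while $\zeta$ and $\eta$ contribute only polynomial growth in $|t|$ by standard convexity bounds. This yields an $O(|z|^N)$ remainder; choosing $N>p$ gives the claimed estimate.
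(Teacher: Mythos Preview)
Your proof is correct and follows essentially the same route as the paper: Mellin inversion applied to $\log G_{r,v}(e^{-z})$, a contour shift justified by Stirling decay of $\Gamma$ against polynomial growth of $\zeta$, and evaluation of the residues at $s=1/r$ and $s=0$. In fact you are more explicit than the paper in two places---the verification that the zeros of $1-2^{-s-1}$, $\zeta(s+1)$, and $\zeta(rs)$ at the negative integers cancel the $\Gamma$-poles (the paper merely asserts ``it is easy to check''), and the residue computation at $s=0$ via the known values $\zeta'(0)=-\tfrac12\log(2\pi)$, $\eta(0)=\tfrac12$, $\eta'(0)=\tfrac12\log(\pi/2)$.
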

\begin{proof}
The proof of the result for $G_{r,1}(e^{-z})$ is similar to $G_{r,2}(e^{-z})$, hence we only prove the later one. We shall follow the  proof of \cite[p.89, Lemma 6.1]{MR0557013}. The series for the Riemann zeta function
$$\zeta(s)=\sum_{n\ge 1}n^{-s}$$
and the Dirichlet eta function
$$\eta(s)=\sum_{n\ge 1}(-1)^{n-1}n^{-s}$$
converge absolutely and uniformly for $s\in\cb$ when $\Re(s)\ge c>1$. Therefore, by using Mellin's transform,
\begin{align*}
\log G_{r,2}(e^{-z})&= 2\sum_{\substack{\ell\ge 1\\ \ell~ odd}}\frac{1}{\ell}\sum_{j\ge 1}\sum_{n\ge 1}(-1)^{n-1}e^{-n\ell j^rz}\\
&=2\sum_{\substack{\ell\ge 1\\ \ell~ odd}}\frac{1}{\ell}\sum_{j\ge 1}\sum_{n\ge 1}(-1)^{n-1}\frac{1}{2\pi\ri}\int_{c-\ri\infty}^{c+\ri\infty}(n\ell j^rz)^{-s}\Gamma(s)\,ds\\
&=\frac{2}{2\pi\ri}\int_{c-\ri\infty}^{c+\ri\infty}\bigg(\sum_{\substack{\ell\ge 1\\ \ell~ odd}}\frac{1}{\ell^{s+1}}\sum_{n\ge 1}\frac{(-1)^{n-1}}{n^s}\sum_{j\ge 1}\frac{1}{j^{rs}}\bigg)\Gamma(s)z^{-s}\,ds,
\end{align*}
that is
\begin{equation}\label{eqIg2}
\log G_{r,2}(e^{-z})=\frac{2}{2\pi\ri}\int_{c-\ri\infty}^{c+\ri\infty}(1-2^{-1-s})\zeta(s+1)\eta(s)\zeta(rs)\Gamma(s)z^{-s}\,ds,
\end{equation}
for all $z\in\cb$ with $\Re(z)>0$. Since the only poles of gamma function $\Gamma(s)$ are at $s=-k~(k\in\zb_{\ge 0})$,
and all are simple; $\eta(s)$ is an entire function on $\cb$; all $s=-2k~(k\in\nb)$ are zeros of zeta function $\zeta(s)$, and $s=1$ is the only pole of $\zeta(s)$ and is simple. Thus it is easy to check that the only possible poles of the integrand
\begin{equation*}
g_r(s)z^{-s}:=(1-2^{-1-s})\zeta(s+1)\eta(s)\zeta(rs)\Gamma(s)z^{-s}
\end{equation*}
are at $s=0$ and $1/r$. For all $\sigma\in[a, b], a,b\in\rb$ and real number $t, |t|\ge 1$, we have the well-known classical facts~(see \cite[p.38, p.92]{MR0364103}) that
$$\Gamma(\sigma+\ri t)\ll_{a,b} |t|^{\sigma-1/2}\exp\left(-\frac{\pi}{2}|t|\right)
\;\mbox{and}\;
\zeta(\sigma+\ri t)\ll_{a,b} |t|^{|\sigma|+1/2}.
$$
Hence we have $g_r(s)\ll_{a,b} |t|^{O(1)}\exp\left(-\frac{\pi}{2}|t|\right)$. Thus, using the residue theorem, moving the line of integration \eqref{eqIg2} to the $\Re(s)=-p$ with any given $p>0$,
and taking into account the possible pole at $s=0$ and $s=1/r$ of $g(s)$, we obtain
\begin{equation}\label{eqas}
\log G_{r,2}(e^{-z})=2\sum_{s\in\{0,1/r\}}{\rm Res}\left(g_r(s)z^{-s}\right)+O(|z|^p),
\end{equation}
as $z\rrw 0$ with $|\arg(z)|<\pi/4$. By Laurent expansion of $\zeta(s+1)$ and $\Gamma(s)$ at $s=0$, we have
$$\zeta(s+1)={1}/{s}+\gamma+O(|s|)\;\mbox{and}\; \Gamma(s)={1}/{s}-\gamma+O(|s|),$$
as $s\rrw 0$.  Therefore,
$$
\res_{s=1/r}\left(g_r(s)z^{-s}\right)=\frac{(1-2^{-1-1/r})\zeta(1/r+1)\eta(1/r)\Gamma(1/r)}{rz^{1/r}},
$$
and
$$
\res_{s=0}\left(g_r(s)z^{-s}\right)=\frac{1}{8}\log\left(\frac{z}{2^{r}\pi^{r+1}}\right).
$$
Combining \eqref{eqas} with above results, we obtain the proof of this proposition.
\end{proof}

We also need the following upper bound results.
\begin{lemma}\label{asb}Let $(r,v)\in\nb\times \{1,2\}$ be given, $z=x+\ri y$ with $x\in\rb_+$ and $y\in(-\pi,\pi]\setminus(-x,x)$. As $x\rrw 0$
$$\Re\left(\log \frac{G_{r,v}(e^{-x})}{G_{r,v}(e^{-z})}\right)\gg x^{-1/r}.$$
\end{lemma}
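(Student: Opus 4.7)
My plan is to first expand the log-difference as a non-negative series via the product formulas of Proposition \ref{pro1}, then reduce the problem to a model sum controlled by a Mellin-type estimate. I present the argument for $v=1$; the case $v=2$ is analogous after applying the identity $\sum_{n\ge 1}(-1)^{n-1}w^n=w/(1+w)$ to re-sum the alternating inner series and verifying non-negativity term by term. From the series expansion underlying Proposition \ref{pro1a}, one has
\begin{equation*}
\Re\log\frac{G_{r,1}(e^{-x})}{G_{r,1}(e^{-z})}=2\sum_{n\ge 1}\sum_{m\text{ odd}}\frac{e^{-mn^rx}(1-\cos(mn^ry))}{m}\;\ge\;2\bigl(f(x)-\Re f(z)\bigr),
\end{equation*}
where $f(w):=\sum_{n\ge 1}e^{-n^rw}$ and we retain only the $m=1$ contribution. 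It thus suffices to show $f(x)-\Re f(z)\gg x^{-1/r}$ for $y\in(-\pi,\pi]\setminus(-x,x)$ as $x\to 0^+$.

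Applying the Mellin-shift technique of Proposition \ref{pro1a} to $f(w)=\frac{1}{2\pi\ri}\int_{c-\ri\infty}^{c+\ri\infty}\Gamma(s)\zeta(rs)w^{-s}\,ds$ and pushing the contour past the poles at $s=1/r$ and $s=0$ gives, for any fixed $\epsilon>0$ and uniformly in the sector $|\arg w|\le\pi/2-\epsilon$,
\begin{equation*}
f(w)=\frac{\Gamma(1/r)}{r}\,w^{-1/r}+O_\epsilon(1).
\end{equation*}
Writing $z=x(1+\ri u)$ with $u=y/x$ and $|u|\ge 1$ (by the hypothesis $|y|\ge x$), we obtain $\Re z^{-1/r}=x^{-1/r}\phi_r(u)$ where $\phi_r(u):=\Re\bigl((1+\ri u)^{-1/r}\bigr)=(1+u^2)^{-1/(2r)}\cos(\arctan(u)/r)$. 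A direct differentiation yields $\phi_r'(u)=-r^{-1}(1+u^2)^{-(r+1)/(2r)}\sin\bigl((r+1)\arctan(u)/r\bigr)<0$ for $u>0$, so $\phi_r$ is strictly decreasing on $[0,\infty)$ and $\sup_{|u|\ge 1}\phi_r(u)=\phi_r(1)=2^{-1/(2r)}\cos(\pi/(4r))=:\lambda_r<1$. Consequently, whenever $z$ lies in this sector,
\begin{equation*}
f(x)-\Re f(z)\;\ge\;\frac{\Gamma(1/r)}{r}(1-\lambda_r)x^{-1/r}-O_\epsilon(1)\;\gg\;x^{-1/r}.
\end{equation*}

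The remaining range $|\arg z|>\pi/2-\epsilon$, equivalently $|y|\ge x\cot\epsilon$ (so $|y|\gg x$), is where the Mellin error deteriorates, and I would handle it directly from the product formula. Using $|1\mp e^{-n^rz}|^2=(1\mp e^{-n^rx})^2\pm 4e^{-n^rx}\sin^2(n^ry/2)$, a factor-by-factor estimate of the product in Proposition \ref{pro1} yields
\begin{equation*}
\log G_{r,1}(e^{-x})-\log|G_{r,1}(e^{-z})|\;\gg\;\sum_{n\le x^{-1/r}}\sin^2(n^ry/2)=\tfrac12\lfloor x^{-1/r}\rfloor-\tfrac12\Re\!\sum_{n\le x^{-1/r}}e^{\ri n^ry}.
\end{equation*}
For $r=1$ the Dirichlet-kernel bound $\bigl|\sum_{n\le X}e^{\ri ny}\bigr|\le\pi/|y|$ immediately gives the required $\gg x^{-1}$ lower bound as soon as $|y|\ge Cx$ for a suitably large constant $C$; for $r\ge 2$ the analogous estimate is supplied by Weyl's inequality for the polynomial exponential sum $\sum_{n\le N}e^{\ri n^ry}$.

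The principal obstacle is this boundary regime $|y|\gg x$: for $r\ge 2$, Weyl's bound can degenerate when $y/(2\pi)$ is a rational with small denominator, requiring either a further partition of the $y$-range into \emph{sub-major} and \emph{sub-minor} arcs, or else an appeal to the higher-$m$ terms discarded when passing from the full double series to $f(x)-\Re f(z)$, since these restore enough averaging to compensate for any loss. Ensuring that the Mellin-valid sector and the boundary regime match up smoothly with a common constant in $\gg x^{-1/r}$, uniformly over all $y\in(-\pi,\pi]\setminus(-x,x)$, is the main technical burden.
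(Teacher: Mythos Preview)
Your reduction is the paper's: expand the product, keep only the $\ell=1$ (your $m=1$) term by non-negativity, and arrive at the inequality
\[
\Re\log\frac{G_{r,v}(e^{-x})}{G_{r,v}(e^{-z})}\;\gg\;\sum_{n\asymp x^{-1/r}}\bigl(1-\cos(n^ry)\bigr)\;=\;\tfrac12\#\{n\}-\tfrac12\Re\sum_{n}e^{\ri n^ry},
\]
so that everything hinges on a bound $\bigl|\sum_{L<n\le 2L}e^{2\pi\ri n^r(y/2\pi)}\bigr|\le(1-\delta_r)L$ uniformly for $L^{-r}<|y/2\pi|\le 1/2$. Your Mellin-sector argument for $|y|\le Cx$ is correct and pleasant, but it only covers a sub-range of the easiest case (denominator $h=1$ in the Dirichlet approximation of $y/2\pi$); the paper does not use it, handling that range instead by the trivial oscillatory-integral bound for $\int_L^{2L}e^{2\pi\ri u^ry}\,du$.

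The gap you flag is genuine and is precisely where the work lies. The paper isolates it as a separate Lemma~\ref{leme} and executes exactly the sub-major/sub-minor partition you list as option~(a): take a Dirichlet approximant $|y/2\pi-d/h|<1/(hL^{r-1})$ with $h\le L^{r-1}$; for $h=1$ use the integral bound; for $2\le h\le L^{1/2}$ write the sum as $h^{-1}\sum_{j\bmod h}e^{2\pi\ri j^rd/h}\int_L^{2L}e^{2\pi\ri u^r(y-d/h)}\,du+O(h)$ and invoke a nontrivial bound on the complete Gauss--Weyl sum $\sum_{j\bmod h}e^{2\pi\ri j^rd/h}$ (the paper cites \cite[Lemma~2.1]{MR3459558}); for $h>L^{1/2}$ apply Weyl's inequality. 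This three-way split is what is missing from your sketch, and without it the claim does not close for $r\ge 2$. Your option~(b) --- restoring the discarded odd-$m$ terms --- does not rescue the situation: replacing $y$ by $my$ with $m$ odd leaves the odd part of the denominator $h$ unchanged, so the same degenerate rationals persist.
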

\begin{proof}
By using Proposition \ref{pro1} with $q\in\cb$ and $|q|<1$, we have
\begin{align*}
\log G_{r,1}(q)
&=\sum_{j\ge 1}\log\left(\frac{1+q^{j^{r}}}{1-q^{j^{r}}}\right)\\
&=\sum_{j\ge 1}\left(\sum_{\ell\ge 1}(-1)^{\ell-1}\frac{q^{\ell j^{r}}}{\ell}+\sum_{\ell\geq1}\frac{q^{\ell j^{r}}}{\ell}\right)\\
&=\sum_{\ell\ge1}\frac{1}{\ell}\sum_{j\geq1}\left((-1)^{\ell-1}+1)q^{\ell j^{r}}\right)=2\sum_{\substack{\ell\ge 1\\ \ell~odd}}\frac{1}{\ell}\sum_{j\ge 1}q^{j^{r}\ell}
\end{align*}
and
\begin{align*}
\log G_{r,2}(q)&=\sum_{n,j\ge 1}\log\left(\frac{1-(-1)^nq^{nj^r}}{1+(-1)^nq^{nj^r}}\right)\\
&=\sum_{n,j\ge 1}\sum_{\ell\ge 1}\frac{1}{\ell}\left(-(-1)^{n\ell}q^{nj^r\ell}+(-1)^{\ell}(-1)^{n\ell}q^{nj^r\ell}\right)\\
&=\sum_{\ell, j\ge 1}\frac{(-1)^{\ell}-1}{\ell}\frac{(-q^{j^r})^{\ell}}{1-(-q^{j^r})^{\ell}}=2\sum_{\substack{\ell\ge 1\\ \ell~odd}}\frac{1}{\ell}\sum_{j\ge 1}\frac{q^{j^r\ell}}{1+q^{j^r\ell}}.
\end{align*}
Furthermore,
\begin{align*}
\Re\left(\log \frac{G_{r,1}(e^{-x})}{G_{r,1}(e^{-z})}\right)&=2\sum_{\substack{\ell\ge 1\\ \ell~odd}}\frac{1}{\ell}\sum_{j\ge 1}e^{-j^r\ell x}\Re \left(1-\exp\left(2\pi\ri \ell j^r\frac{y}{2\pi}\right)\right)
\end{align*}
and
\begin{align*}
\Re\left(\log \frac{G_{r,2}(e^{-x})}{G_{r,2}(e^{-z})}\right)&=2\sum_{\substack{\ell\ge 1\\ \ell~odd}}\frac{1}{\ell}\sum_{j\ge 1}\Re\left(\frac{e^{- j^r\ell x}}{1+e^{-j^r\ell x}}-\frac{e^{- j^r\ell z}}{1+e^{-j^r\ell z}}\right)\\
&=2\sum_{\substack{\ell\ge 1\\ \ell~odd}}\frac{1}{\ell}\sum_{j\ge 1}\frac{\tanh\left(j^r\ell \frac{x}{2}\right)}{\cosh(j^r\ell x)+\cos(j^r\ell y)}\sin^2\left(j^r\ell\frac{y}{2}\right).
\end{align*}
By noting that all summand in above sums are nonnegative we have
\begin{align*}
\Re\left(\log \frac{G_{r,1}(e^{-x})}{G_{r,1}(e^{-z})}\right)
&\ge 2\sum_{j\ge 1}e^{-j^r x}\Re \left(1-\exp\left(2\pi\ri j^r\frac{y}{2\pi}\right)\right)\\
&\gg \sum_{(2\pi/x)^{1/r}< j\le 2(2\pi/x)^{1/r}}\Re \left(1-\exp\left(2\pi\ri j^r\frac{y}{2\pi}\right)\right)
\end{align*}
and
\begin{align*}
\Re\left(\log \frac{G_{r,2}(e^{-x})}{G_{r,2}(e^{-z})}\right)
&\ge 2\sum_{j\ge 1}\frac{\tanh\left(j^r \frac{x}{2}\right)}{\cosh(j^r x)+\cos(j^r y)}\sin^2\left(j^r\frac{y}{2}\right)\\
&\gg \sum_{(2\pi/x)^{1/r}< j\le 2(2\pi/x)^{1/r}}\Re \left(1-\exp\left(2\pi\ri j^r\frac{y}{2\pi}\right)\right).
\end{align*}
Thus by using the following Lemma \ref{leme} with $L=(2\pi/x)^{1/r}$, we find that
$$\Re\left(\log \frac{G_{r,v}(e^{-x})}{G_{r,v}(e^{-z})}\right)\gg \delta_r(2\pi/x)^{1/r}\gg x^{-1/r},$$
holds for all sufficiently small $x>0$ and $v\in\{1,2\}$. This finishes the proof.
\end{proof}

\begin{lemma}\label{leme}Let $r\in\nb$, $y\in\rb$ and $L\in\rb_+$ such that $L^{-r}< |y|\le 1/2$. Then there exists a constant $\delta_r\in(0,1)$ depending only on $r$ such that
\begin{equation*}
\left|\sum_{L< n\le 2L}e^{2\pi\ri n^ry}\right|\le (1-\delta_r)L,
\end{equation*}
holds for all positive sufficiently large $L$.
\end{lemma}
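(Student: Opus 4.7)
I would prove this Weyl-type cancellation estimate by induction on $r$, using Weyl's differencing trick to lower the degree of the polynomial phase.

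For the base case $r=1$, the sum is geometric:
\[
\left|\sum_{L<n\le 2L} e^{2\pi\ri ny}\right| = \left|\frac{\sin(\pi L y)}{\sin(\pi y)}\right| \le \frac{1}{2|y|} < \frac{L}{2},
\]
using $|\sin(\pi y)|\ge 2|y|$ for $|y|\le 1/2$ together with the hypothesis $|y|>L^{-1}$. Hence $\delta_1 = 1/2$ works.

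For the inductive step ($r\ge 2$), I would apply Weyl's differencing $r-1$ times, each step using $\Delta_h p(n) = p(n+h)-p(n)$ to lower the degree of the polynomial phase by one and preserve its leading coefficient up to a factor of $h\cdot \deg(p)$. This ultimately yields the standard Weyl inequality
\[
|S|^{2^{r-1}} \ll L^{2^{r-1}-r}\sum_{|h_1|,\ldots,|h_{r-1}|\le L}\min\!\bigl(L,\ \|r!\,h_1\cdots h_{r-1}\,y\|^{-1}\bigr),
\]
with $\|\cdot\|$ denoting distance to the nearest integer. I would then bound the right-hand side via Dirichlet approximation $y = a/q+\beta$, $(a,q)=1$, $1\le q\le L^{r-1}$, $|\beta|\le (qL^{r-1})^{-1}$, combined with a case split on $q$. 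For large $q$, Weyl's inequality itself gives the much stronger bound $|S|\ll L^{1-c_r}$. For small $q\ge 2$, the sum is well-approximated by $(L/q)\sum_{c\bmod q}e^{2\pi\ri c^r a/q}$, a degree-$r$ Gauss sum of modulus $\ll q^{1-1/r}$, yielding $|S|\le L q^{-1/r}+o(L)\le L\cdot 2^{-1/r}+o(L)$.

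The main obstacle is the boundary case $q=1$, where $y$ itself is small ($L^{-r}<|y|\le L^{-(r-1)}$) and the Weyl/Gauss-sum arguments become inconclusive. Here I would compare the sum to the oscillatory integral $\int_L^{2L}e^{2\pi\ri x^r y}\,dx$ via Euler--Maclaurin, and estimate that integral by van der Corput's first-derivative test as $\ll (r|y|L^{r-1})^{-1}$. The hypothesis $|y|>L^{-r}$ then forces this bound to be $\ll L/r$, so that $|S|\le L/r+o(L)$; since $1/r<1$ for $r\ge 2$ and $\delta_1=1/2$ has already handled $r=1$, these three regimes combine into a single uniform constant $\delta_r\in(0,1)$ valid for all $L^{-r}<|y|\le 1/2$. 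Tracking the worst case carefully across the three regimes is the technically delicate part, but its feasibility follows from the standard structure of Weyl/van~der~Corput estimates.
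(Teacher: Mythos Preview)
Your proposal is correct and follows essentially the same route as the paper: Dirichlet approximation $y\approx a/q$ with $q\le L^{r-1}$, then a three-regime split into $q=1$ (compare to the oscillatory integral and apply the first-derivative bound, using $|y|>L^{-r}$), small $q\ge 2$ (Gauss-sum estimate for $\sum_{c\bmod q}e^{2\pi\ri c^r a/q}$), and large $q$ (Weyl's inequality). The only cosmetic differences are that the paper cites the integral/Gauss-sum comparison from Iwaniec--Kowalski and the Gauss-sum saving from Gafni rather than invoking Euler--Maclaurin and the $q^{1-1/r}$ bound directly, and it makes the cutoff between ``small'' and ``large'' $q$ explicit at $q\asymp L^{1/2}$.
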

\begin{proof}The lemma for $r=1$ is easy and we shall focus on the cases of $r\ge 2$.
By the well-known Dirichlet's approximation theorem, for any $y\in\rb$ and $L>0$ being sufficiently large, then there exists integers $d$ and $h$ with $0<h\le L^{r-1}$ and $\gcd(h, d)=1$ such that
\begin{equation}\label{eq5}
\left|y-\frac{d}{h}\right|<\frac{1}{hL^{r-1}}.
\end{equation}
The use of \cite[Equation 20.32]{MR2061214} implies that
\begin{equation}\label{eq2100}
\sum_{L<n\le 2L}e^{2\pi \ri n^r y}=\frac{1}{h}\sum_{1\le j\le h}e^{2\pi\ri j^r\frac{d}{h}}\int_{L}^{2L}e^{2\pi\ri u^r(y-\frac{d}{h})}\,du+O(h).
\end{equation}
If real number $y$ satisfies $L^{-r}<|y|\le L^{1-r}$, then $y$ satisfies the approximation \eqref{eq5} with $(h,d)=(1,0)$. Which means that
\begin{align}\label{eq210}
\left|\sum_{L<n\le 2L}e^{2\pi \ri n^r y}\right|&=\left|\int_{L}^{2L}e^{2\pi\ri u^ry}\,du+O(1)\right|\nonumber\\
&\le 2\cdot\frac{1}{2\pi r|y|L^{r-1}}(1+2^{1-r})+O(1)\le \frac{1+2^{1-r}}{\pi r}L+O(1).
\end{align}
If real number $y$ satisfies $1/2\ge |y|\ge L^{1-r}$, then $y$ satisfies the approximation \eqref{eq5} with $h\ge 2$.
Further, by using \cite[Lemma 2.1]{MR3459558} in \eqref{eq2100}, we find that there exists a positive constant $\delta_{r1}$ depending only on $r$ such that
\begin{equation}\label{eq211}
\left|\sum_{L<n\le 2L}e^{2\pi \ri n^r y}\right|\le (1-\delta_{r1})L+O(h).
\end{equation}
On the other hand, the use of Weyl's inequality (see \cite[Lemma 20.3]{MR2061214}) implies
\begin{equation}\label{eq212}
\sum_{L< j\le 2L}e^{2\pi \ri j^r y}\ll_{\varepsilon} L^{1+\varepsilon}(h^{-1}+L^{-1}+hL^{-r})^{2^{1-r}}\ll L^{1-2^{-r-1/2}},
\end{equation}
holds for all integers $h\in( L^{1/2}, L^{r-1}]$. By using \eqref{eq210}, \eqref{eq211} and \eqref{eq212}, it is not difficult to obtain the proof of the lemma.
\end{proof}
\section{Proof of the main theorem}
From Proposition \ref{pro1a} and Lemma \ref{asb}, we can check that the sequences $\{s_{r,1}(n)\}_{n\ge 0}$ and $\{s_{r,2}(n)\}_{n\ge 0}$ satisfy the conditions of Proposition \ref{pro10} below.  Therefore, applying the following proposition, Theorem \ref{main} and Corollary \ref{cor} follows.
\begin{proposition}\label{pro10}For a given real number sequence $\{c_n\}_{n\ge 0}$ letting $G(q):=\sum_{n\ge 0}c_nq^n$.  Suppose that for $x\in \rb_+$ and $y\in(-\pi,\pi]$,
$$G(e^{-x-\ri y})-\gamma (x+\ri y)^{\beta}e^{\kappa \alpha^{-1}(x+\ri y)^{-\alpha}}\ll x^{p}G(e^{-x}),~x\rrw 0,$$
holds for any given $p>0$, where $\kappa, \gamma,\beta, \alpha\in\rb_+$. Then, for any given $p>0$ we have
$$c_n=\gamma\left(\frac{\kappa}{n}\right)^{\frac{1+\beta}{1+\alpha}}W_{\alpha,\beta}\left(\kappa^{\frac{1}{1+\alpha}}n^{\frac{\alpha}{1+\alpha}}\right)\left(1+O(n^{-p})\right),$$
as integer $n\rrw \infty$. In particular,
$$c_n\sim 2^{-1/2}\pi^{-1/2}(1+\alpha)^{-1/2}\gamma\kappa^{\frac{\beta+1/2}{1+\alpha}}n^{-\frac{1+\beta+\alpha/2}{1+\alpha}}e^{(1+\alpha^{-1})\kappa^{\frac{1}{1+\alpha}}n^{\frac{\alpha}{1+\alpha}}},~ n\rrw \infty. $$
\end{proposition}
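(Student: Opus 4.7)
The plan is a standard circle-method/saddle-point argument driven by the hypothesized approximation. Starting from Cauchy's integral formula
$$
c_n=\frac{e^{nx}}{2\pi}\int_{-\pi}^{\pi}G(e^{-x-\ri y})e^{\ri ny}\,dy,\qquad x>0,
$$
I would fix the radius at $x=x_n:=(\kappa/n)^{1/(1+\alpha)}$, the critical point of $nx+\kappa\alpha^{-1}x^{-\alpha}$. With this choice $nx_n=\kappa^{1/(1+\alpha)}n^{\alpha/(1+\alpha)}=:\lambda$ and $\kappa\alpha^{-1}x_n^{-\alpha}=\lambda/\alpha$, so that the model exponent at the saddle equals $\lambda(1+\alpha)/\alpha$. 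Invoking the hypothesis uniformly in $y\in(-\pi,\pi]$, I would replace $G(e^{-x_n-\ri y})$ by the model $\gamma(x_n+\ri y)^{\beta}\exp(\kappa\alpha^{-1}(x_n+\ri y)^{-\alpha})$; the hypothesis at $y=0$ yields $G(e^{-x_n})=\gamma x_n^{\beta}\exp(\kappa\alpha^{-1}x_n^{-\alpha})(1+O(x_n^p))$, so the accumulated additive error satisfies
$$
e^{nx_n}\,x_n^{p}\,G(e^{-x_n})\ll \gamma\,x_n^{p+\beta}\,e^{\lambda(1+\alpha)/\alpha}.
$$

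For the model integral I would substitute $z=x_n(1+\ri u)$, which gives $dy=x_n\,du$, $u\in[-\pi/x_n,\pi/x_n]$, $e^{nx_n}e^{\ri ny}=e^{\lambda(1+\ri u)}$ and $\kappa\alpha^{-1}z^{-\alpha}=\lambda\alpha^{-1}(1+\ri u)^{-\alpha}$. A short computation then gives
$$
\frac{\gamma e^{nx_n}}{2\pi}\int_{-\pi}^{\pi}(x_n+\ri y)^{\beta}e^{\kappa\alpha^{-1}(x_n+\ri y)^{-\alpha}}e^{\ri ny}\,dy=\gamma\left(\frac{\kappa}{n}\right)^{(1+\beta)/(1+\alpha)}\cdot\frac{1}{2\pi}\int_{-\pi/x_n}^{\pi/x_n}(1+\ri u)^{\beta}e^{\lambda(\alpha^{-1}(1+\ri u)^{-\alpha}+(1+\ri u))}\,du.
$$
Truncating the $u$-integral from $[-\pi/x_n,\pi/x_n]$ to $[-1,1]$ converts this into $\gamma(\kappa/n)^{(1+\beta)/(1+\alpha)}W_{\alpha,\beta}(\lambda)$ plus an exponentially small remainder: since $|(1+\ri u)^{-\alpha}|\le 2^{-\alpha/2}$ for $|u|\ge 1$, the real part $f(u):=\re((1+\ri u)+\alpha^{-1}(1+\ri u)^{-\alpha})$ satisfies $f(u)\le (1+\alpha)/\alpha-c$ there with $c:=\alpha^{-1}(1-2^{-\alpha/2})>0$, while $f(0)=(1+\alpha)/\alpha$.

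For the leading-order behavior of $W_{\alpha,\beta}(\lambda)$ as $\lambda\rrw\infty$, I would apply the classical saddle-point method (e.g.\ \cite[p.~127, Theorem 7.1]{MR0435697}): the critical point is $u=0$, the second derivative of the exponent there equals $-(1+\alpha)$, and hence
$$
W_{\alpha,\beta}(\lambda)\sim \frac{e^{\lambda(1+\alpha)/\alpha}}{\sqrt{2\pi(1+\alpha)\lambda}}.
$$
The additive error $\gamma x_n^{p+\beta}e^{\lambda(1+\alpha)/\alpha}$ then represents a multiplicative perturbation of relative size $x_n^{p-1}\sqrt{\lambda}=O(n^{-(p-1-\alpha/2)/(1+\alpha)})$, so enlarging $p$ in the hypothesis yields any prescribed $O(n^{-p'})$ in the conclusion. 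Substituting $\lambda=\kappa^{1/(1+\alpha)}n^{\alpha/(1+\alpha)}$ in the saddle-point estimate of $W_{\alpha,\beta}$ and bookkeeping the powers of $n$ and $\kappa$ produces the claimed ``in particular'' leading-order formula.

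The technically delicate step is the tail bound for $|u|\ge 1$ in the rescaled integral, where one must confirm that the oscillatory behavior of $f(u)$ never recovers the saddle value away from the origin; the elementary estimate $|(1+\ri u)^{-\alpha}|\le 2^{-\alpha/2}$ handles this cleanly. The uniformity in $y$ required on all of $(-\pi,\pi]$ is already packaged into the hypothesis, so the remaining work is routine bookkeeping.
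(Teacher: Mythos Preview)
Your proposal is correct and follows essentially the same approach as the paper: Cauchy's formula at the saddle radius $x=(\kappa/n)^{1/(1+\alpha)}$, replacement of $G$ by the model via the hypothesis, and the tail estimate $\Re((1+\ri u)^{-\alpha})\le 2^{-\alpha/2}$ for $|u|\ge 1$ (equivalently $\Re((x+\ri y)^{-\alpha})\le(\sqrt 2\,x)^{-\alpha}$ for $|y|\ge x$), followed by the standard saddle-point evaluation of $W_{\alpha,\beta}$. The only cosmetic difference is that you rescale first and then truncate the $u$-integral to $[-1,1]$, whereas the paper truncates the $y$-integral to $[-x,x]$ and then rescales; these are the same computation in a different order.
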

\begin{proof}
For any given positive sufficiently large integer $n$, by using the orthogonality we have
\begin{align*}
c_n=\frac{1}{2\pi}\int_{-\pi}^{\pi}G(e^{-x-\ri y})e^{nx+n\ri y}dy.
\end{align*}
We split above integral as
\begin{align}\label{eq31}
c_n=&\frac{1}{2\pi}\left(\int_{-x }^{x}+\int_{x<|y|\le \pi}\right)\gamma (x+\ri y)^{\beta}e^{\kappa \alpha^{-1}(x+\ri y)^{-\alpha}+n(x+\ri y)}\,dy\nonumber\\
&+\frac{1}{2\pi}\int_{-\pi}^{\pi}\left(G(e^{-x-\ri y})-\gamma (x+\ri y)^{\beta}e^{\kappa \alpha^{-1}(x+\ri y)^{-\alpha}}\right)e^{n(x+\ri y)}dy\nonumber\\
=&:(I_M(n)+I_R(n))+E(n).
\end{align}
Let $x=\left(\frac{\kappa}{n}\right)^{\frac{1}{\alpha+1}}$. For $E(n)$ we estimate that
\begin{align}\label{eql1}
E(n)&\ll \int_{-\pi}^{\pi}x^{p}G(e^{-x})e^{nx}dy\nonumber\\
&\ll  \int_{-\pi}^{\pi}x^pe^{\kappa \alpha^{-1}x^{-\alpha}+nx}dy\ll n^{-p/(1+\alpha)}e^{(\alpha^{-1}+1)\kappa^{\frac{1}{1+\alpha}}n^{\frac{\alpha}{1+\alpha}}},
\end{align}
holds for any given $p>0$.  For $I_R(n)$ we estimate that
\begin{align}\label{eql1}
I_R(n)&\ll \int_{x<|y|\le \pi}e^{\kappa \alpha^{-1}\Re((x+\ri y)^{-\alpha})+nx}dy\nonumber\\
&\ll  \int_{-\pi}^{\pi}e^{\kappa \alpha^{-1}(\sqrt{2}x)^{-\alpha}+nx}dy\ll n^{-p}e^{(\alpha^{-1}+1)\kappa^{\frac{1}{1+\alpha}}n^{\frac{\alpha}{1+\alpha}}},
\end{align}
holds for any given $p>0$. For $I_M(n)$ we compute that
\begin{align*}
I_M(n)&=\frac{\gamma}{2\pi\ri }\int_{x-\ri x}^{x+\ri x}z^{\beta}e^{\kappa\alpha^{-1} z^{-\alpha}+nz}dz\\
&=\frac{\gamma x^{1+\beta}}{2\pi\ri}\int_{1-\ri}^{1+\ri}u^{\beta}e^{\kappa\alpha^{-1} x^{-\alpha} u^{-\alpha}+nxu}du\\
&=\gamma\left(\frac{\kappa}{n}\right)^{\frac{1+\beta}{1+\alpha}}\frac{1}{2\pi\ri}\int_{1-\ri}^{1+\ri}u^{\beta}e^{\kappa^{\frac{1}{1+\alpha}}n^{\frac{\alpha}{1+\alpha}}\left(\alpha^{-1}u^{-\alpha}+u\right)}du,
\end{align*}
that is
\begin{align}\label{eql2}
I_M(n)=\gamma\left(\frac{\kappa}{n}\right)^{\frac{1+\beta}{1+\alpha}}W_{\alpha,\beta}\left(\kappa^{\frac{1}{1+\alpha}}n^{\frac{\alpha}{1+\alpha}}\right).
\end{align}
By using the standard Laplace saddle-point method(see for example \cite[p.127, Theorem 7.1]{MR0435697}), since the integral
$$W_{\alpha,\beta}\left(\lambda\right)=\frac{1}{2\pi}\int_{-1}^{1}(1+\ri u)^{\beta}\exp\left(\lambda(\alpha^{-1}(1+\ri u)^{-\alpha}+(1+\ri u) )\right)\,du,$$
has a simple saddle point $u=0$, it is not difficult to prove that
\begin{align}\label{eql20}
W_{\alpha,\beta}\left(\lambda\right)\sim \frac{1}{\sqrt{2\pi(1+\alpha)}} \frac{e^{(1+\alpha^{-1})\lambda}}{\lambda^{1/2}} ,
\end{align}
as $\lambda\rrw+\infty$. The proof of Proposition \ref{pro10} follows from \eqref{eq31}--\eqref{eql2} and \eqref{eql20}. This completes the proof.
\end{proof}

\section*{Acknowledgements.}
This research was partly supported by the National Science Foundation of China (Grant No. 11971173). The authors would like to thank the anonymous referee for his/her very helpful comments and suggestions.


\bigskip
\noindent
{\sc Nian Hong Zhou\\
{School of Mathematics and Statistics, Guangxi Normal University\\
No.1 Yanzhong Road, Yanshan District, 541006\\
\medskip
Guilin, Guangxi, PR China}~\\
{School of Mathematical Sciences, East China Normal University\\
500 Dongchuan Road, Minhang District, 200241\\
Shanghai, PR China}}\\
Email:~\href{mailto:nianhongzhou@outlook.com; nianhongzhou@gxnu.edu.cn}{\small nianhongzhou@outlook.com; nianhongzhou@gxnu.edu.cn}

\bigskip
\noindent
{\sc Yalin Sun\\
School of Mathematical Sciences, East China Normal University\\
500 Dongchuan Road\\
Shanghai 200241, PR China}\newline
Email:~\href{mailto:yalinsun@outlook.com}{\small yalinsun@outlook.com}

\end{document}